\theoremstyle{plain}
\newtheorem{thm}{Theorem}[section]
\theoremstyle{definition}
\begin{document} 

\title[On finite groups with nine centralizers]{On finite groups with nine centralizers} 

\author[S. J. Baishya  ]{Sekhar Jyoti Baishya*} 
\address{S. J. Baishya, Department of Mathematics, North-Eastern Hill University,
Permanent Campus, Shillong-793022, Meghalaya, India.}

\email{sekharnehu@yahoo.com}

\begin{abstract}
Given a finite group $G$, let $Cent(G)$ denote the set of distinct centralizers of elements of $G$. The group $G$ is called $n$-centralizer if $|Cent(G)|=n$ and primitive $n$-centralizer if $|Cent(G)|=|Cent(\frac{G}{Z(G)})|=n$. In this paper, we characterize the $9$-centralizer and the primitive $9$-centralizer groups.
\end{abstract}

\subjclass[2010]{11A25, 20D60, 20E99}
\keywords{Finite groups, $n$-centralizer groups, primitive $n$-centralizer groups}
\thanks{*I am deeply indebted to my supervisor Prof. Ashish Kumar Das for his constant encouragement throughout my M.Phil and Ph.D career.}
\maketitle

\section{Introduction} \label{S:intro}

In this paper, all groups are finite  and all notations are usual. For example $C_n$ denotes the cyclic group of order $n$, $Z(G)$ denotes the center of a group $G$, $D_{2n}$ denotes the dihedral group of order $2n$, $ C_n \rtimes C_p$ denotes the semidirect product of $C_n$ and $C_p$ and $(C_6, C_7)$ denotes the Frobenius group with complement $C_6$ and the kernel $C_7$. A finite group $G$ is said to be a $CA$-group if $C(x)$ is abelian for all $x \in G \setminus Z(G)$.

Given a finite group $G$, let $Cent(G)$ denote the set of centralizers of $G$, i.e., $Cent(G)=\lbrace C(x) \mid x \in G\rbrace $, where $C(x)$ is the centralizer of the element $x$ in $G$. The group $G$ is called $n$-centralizer if $|Cent(G)|=n$ and primitive $n$-centralizer if $|Cent(G)|=|Cent(\frac{G}{Z(G)})|=n$. The study of finite groups in terms of $|Cent(G)|$, becomes an interesting research topic in the recent years.  Starting with Belcastro and Sherman in 1994 \cite{ctc092}, many authors have studied the influence of $|Cent(G)|$ on a finite group $G$ (see [1], [3--7] and [13--15]). It is clear that a group is $1$-centralizer if and only if it is abelian. In \cite{ctc092}, Belcastro and Sherman proved that there is no $n$-centralizer group for $n=2, 3$. On the otherhand, A .R. Ashrafi in \cite {en09} proved that there exists $n$-centralizer groups for $n \neq 2, 3$. The finite $n$-centralizer groups for $n=4, \dots, 8$ has been characterized (see \cite{ctc092}, \cite{ctc09}, \cite{ed09}). In \cite{baishya}, we characterized finite odd order $9$-centralizer groups. 

In this paper we continue with this problem and prove that $G$ is a finite $9$-centralizer group if and only if $\frac{G}{Z(G)}\cong C_7 \rtimes C_2$ or $C_7 \rtimes C_3$ or $(C_6, C_7)$ or $C_7 \times C_7$. As a consequence we also characterize the primitive $9$-centralizer finite groups.

\section{The main results}

In this section we prove the main results of the paper:

\begin{thm}\label{thm1}
Let $G$ be a finite group. Then $G$ is a $9$-centralizer group if and only if $\frac{G}{Z(G)}\cong C_7 \rtimes C_2$ or $C_7 \rtimes C_3$ or $(C_6, C_7)$ or $C_7 \times C_7$.
\end{thm}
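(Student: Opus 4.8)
The plan is to prove the two implications separately.

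For the ``if'' direction I would take $G$ with $\overline G:=\frac{G}{Z(G)}$ isomorphic to one of the four listed groups, write $\pi\colon G\to\overline G$, and first check that $C_G(x)/Z(G)$ is cyclic for every $x\notin Z(G)$: when $\overline G$ is one of the three Frobenius groups this holds because $C_G(x)/Z(G)\le C_{\overline G}(\overline x)$, which is the cyclic member of the natural partition of $\overline G$ (the Frobenius kernel, or one of its conjugate complements) that contains $\overline x$; when $\overline G\cong C_7\times C_7$ the group $G$ has class $2$, the map $g\mapsto[g,x]$ is a homomorphism of $\overline G$ into $[G,G]\le Z(G)$ whose image has order $7$ (order $49$ would give $C_G(x)=Z(G)$, order $1$ would give $x\in Z(G)$), so $|C_G(x):Z(G)|=7$. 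Hence every $C_G(x)$ is abelian, $G$ is a $CA$-group, and for a non-abelian $CA$-group the proper centralizers are exactly the maximal abelian subgroups; here these are the preimages under $\pi$ of the eight subgroups of order $7$ of $\overline G$ (if $\overline G\cong C_7\times C_7$), respectively of the Frobenius kernel together with its seven conjugate complements (in the Frobenius cases), so $|Cent(G)|=8+1=9$.

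For the ``only if'' direction I would first prove that $|Cent(G)|=9$ forces $G$ to be a $CA$-group, and I expect this to be the main obstacle. The idea: if $a\in G\setminus Z(G)$ has $H:=C_G(a)$ non-abelian, then $H\lneq G$, $a\in Z(H)$, $|Cent(H)|\ge4$, and $Cent(G)$ is the disjoint union of $\{C_G(h):h\in H\}$ (every member containing $a$; this set surjects onto $Cent(H)$ via $C_G(h)\mapsto C_G(h)\cap H$, so has at least $4$ elements) and $\{C_G(g):g\in G\setminus H\}$ (no member containing $a$; it has at least $2$ elements, since a common centralizer of all of $G\setminus H$ would contain $\langle G\setminus H\rangle=G$). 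One then has to refine this bookkeeping — using the known descriptions of $m$-centralizer groups for $m\le8$ applied to $H$, and iterating when a further centralizer of $G$ is non-abelian — to push the total to $|Cent(G)|\ge10$, a contradiction.

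Once $G$ is known to be a non-abelian $CA$-group I would invoke the structure theorem for $CA$-groups: $\overline G:=\frac{G}{Z(G)}$ is either (i) elementary abelian, hence $\cong C_p\times C_p$ for a prime $p$; or (ii) a Frobenius group with abelian Frobenius kernel $\overline K$; or (iii) $\cong PSL_2(2^f)$ $(f\ge2)$ or $Sz(2^{2f+1})$ $(f\ge1)$. Case (iii) is excluded because already $|Cent(PSL_2(4))|=|Cent(A_5)|=22>9$ and these numbers only grow. In case (i) one has $|Cent(G)|=p+2$ (by the computation used in the ``if'' direction), so $p=7$ and $\overline G\cong C_7\times C_7$. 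In case (ii) a short case analysis — separating whether $\pi^{-1}(\overline K)$ is abelian (if not, $\overline K\cong C_p\times C_p$ and $|Cent(G)|=p^2+p+2\ne9$) and noting that the complement must be abelian, else $|Cent(G)|\ge10$ — gives $|Cent(G)|=|\overline K|+2$, whence $|\overline K|=7$, so $\overline K\cong C_7$, and the complement, a nontrivial subgroup of $\Aut(C_7)\cong C_6$, is $C_2$, $C_3$ or $C_6$; that is, $\overline G\cong C_7\rtimes C_2$, $C_7\rtimes C_3$ or $(C_6,C_7)$. Combining the cases, $\frac{G}{Z(G)}$ is one of the four stated groups.
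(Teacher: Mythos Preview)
Your route differs substantially from the paper's. The paper fixes a maximal set $\{x_1,\dots,x_r\}$ of pairwise non-commuting elements, uses the bound $5\le r\le 8$, and eliminates $r=5,6,7$ by explicit index and coset-counting arguments (citing covering lemmas of Tomkinson, Cohn, and Abdollahi--Amiri--Hassanabadi); only at $r=8$ does the $CA$-property emerge, after which $|G:X_2|$ is pinned down to $7$ case by case rather than via a structure theorem. Your plan --- prove $CA$ directly by centralizer bookkeeping, then invoke the $CA$-classification --- is more conceptual, and your ``if'' direction is cleaner than the paper's bare citations.

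However, the step you yourself flag as ``the main obstacle'' is a genuine gap. Your bookkeeping yields only $|Cent(G)|\ge 7$: the surjection onto $Cent(H)$ gives at least $|Cent(H)|\ge 4$ centralizers containing $a$ (in fact at least $5$, since both $G$ and $H=C_G(a)$ lie in the fibre over $H\in Cent(H)$), and at least $2$ not containing $a$ --- well short of the $10$ you need. The promised ``refinement'' is where essentially all the difficulty sits, and nothing in your outline indicates how to avoid a case analysis of weight comparable to the paper's $r=5,6,7$ work. Your endgame also needs tightening: the $CA$-structure theorem you quote is not standard in that form (Suzuki groups are $CN$- but not $CA$-groups; in case~(i) you have not argued why $G/Z(G)$ cannot be a $p$-group larger than $C_p\times C_p$), and in the Frobenius case the assertion that $\pi^{-1}(\overline K)$ non-abelian forces $\overline K\cong C_p\times C_p$ with $|Cent(G)|=p^2+p+2$ is unsupported. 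The paper sidesteps all of this by first obtaining solvability from Zarrin's criterion and then directly constraining the index $|G:X_2|$.
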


\begin{proof}
Let $G$ be a  finite $9$-centralizer group. Let $\lbrace x_1, x_2, \dots , x_r\rbrace $ be a set of pairwise non-commuting elements of $G$ having maximal size. 
Suppose $X_i= C(x_i)$, $1 \leq i \leq r$ and $|G: X_1| \leq |G: X_2| \leq \dots \leq |G: X_r|$. By \cite[Lemma 2.4]{ed09}, we have $5 \leq r \leq 8$. 

Now, suppose $r=5$. By \cite[Lemma 2.6]{ed09}, $G$ is not a $CA$-group. Therefore in view of  \cite[Remark 2.1]{ed09}, we have $|G: Z(G)| = 16$, otherwise $G$ will be a $CA$-group. It follows that 
\[
G=Z(G) \sqcup y_1Z(G)\sqcup \dots \sqcup y_{15}Z(G), 
\]
where  $y_i \in G \setminus Z(G)$ and $1 \leq i \leq15$.
It can be easily verify that $G$ has a centralizer of index $2$, otherwise $G$ will be a $CA$-group. Without any loss, we may assume that $|C(y_1)|=\frac{|G|}{2}$. Let
\[
C(y_1)=Z(G) \sqcup y_1Z(G)\sqcup \dots \sqcup y_7Z(G).
\]
Now, suppose $y \in G \setminus C(y_1)$ and $|C(y)|=\frac{|G|}{2}$. Without any loss, we may assume that $y=y_8$. Clearly, $C(y_1) \cap C(y_8) \neq Z(G)$, otherwise $|G: Z(G)|=4$, which is a contradiction. Next, suppose $|C(y_1) \cap C(y_8)|= 2|Z(G)|$. Then there exists some $y_iZ(G), 2 \leq i \leq 7$ such that $y_iZ(G) \subseteq C(y_1) \cap C(y_8)$. But then $|C(y_i)|=\frac{|G|}{2}$ and $C(y_i)$ will be different from $C(y_1)$ and $C(y_8)$, noting that $|Z(C(y_1))|=|Z(C(y_8))|=|Z(C(y_i))|=2|Z(G)|$. In this situation one can easily see that 
\[
G=C(y_1) \cup C(y_8)\cup C(y_i).
\]
Then by \cite[Theorem 1]{bryan},  we have $|\frac{G}{C(y_1) \cap C(y_8)\cap C(y_i)}|=4$, forcing $|\frac{C(y_1) \cap C(y_8)\cap C(y_i)}{Z(G)}|=4$, which is impossible. Finally,  suppose $|C(y_1) \cap C(y_8)|= 4|Z(G)|$. Then there exists some $y_{i_1}Z(G), y_{i_2}Z(G), y_{i_3}Z(G),  2 \leq i_1< i_2< i_3 \leq 7$ such that
\[
y_{i_1}Z(G), y_{i_2}Z(G), y_{i_3}Z(G) \subseteq C(y_1) \cap C(y_8).
\]
In the present situation also one can easily verify that 
\[|C(y_1)|=|C(y_8)|=|C(y_{i_1})|=|C(y_{i_2})|=|C(y_{i_3})|=\frac{|G|}{2},
\]
and all of the above centralizers are distinct, noting that the size of the centers of each of the above centralizers  is $2|Z(G)|$. Now, considering the centralizers of $y_i$'s, $i \in \lbrace 2, 3, \dots, 15 \rbrace \setminus \lbrace 1, i_1, i_2, i_3, 8 \rbrace$, one can verify that $|Cent(G)|>9$, which is a contradiction. Thus we have seen that $|C(y_8)|\leq \frac{|G|}{4}$. Therefore $y_8, \dots, y_{15}$ will give at least $4$ proper distinct centralizers of $G$ other that $C(y_1)$. Now, considering the centralizers of $y_2, \dots, y_7$, one can see that $|Cent(G)|> 9$, which is again a contradiction.

Next, suppose $r=6$. By \cite[Lemma 2.6]{ed09}, $G$ is not a $CA$-group. Again, by \cite[Remark 2.1]{ed09}, we have $G=X_1 \cup \dots \cup X_6$ and by \cite[Proposition 2.5]{ed09}, $X_i$'s are abelian for all $1 \leq i \leq 6$. Moreover, it folllows from \cite[Lemma 3.3]{ctc094} that $|G:X_2|\leq 5$ and from \cite[Remark 2.1]{ed09} that $|G: Z(G)|\leq 36$. Therefore the possible values of $|G: Z(G)|$ are $16, 24, 32$ and $36$, noting that if $|G: Z(G)|=pqr$, where $p, q, r$ are primes, then $G$ is a $CA$-group. It is easy to see from \cite[Proposition 2.5]{ed09} that $|G:X_1|=|G:X_2|=4$, otherwise $|G: Z(G)| \leq 12$.  

Now, suppose $x \in (X_1 \cap X_2) \setminus Z(G)$. Then 
\[
G= C(x) \cup X_3 \cup X_4 \cup X_5 \cup X_6.
\]
It follows from \cite[Lemma 3.3]{ctc094}, that  $|G:X_3| \leq 4$. Now, note that $X_i \cap X_j =Z(G)$ for any $3 \leq i, j \leq 6$, $i \neq j$. Otherwise by \cite[Lemma 3.3]{ctc094} we get $|G:X_k|\leq 3$, for some $3 \leq k \leq 6$. But then  $|G: Z(G)| \leq 6$ (by \cite[Proposition 2.5]{ed09}), which is impossible. Also, note that $C(x) \cap X_l \neq Z(G)$, for any $3 \leq l \leq 6$, otherwise $|G: Z(G)|\leq 8$, which is again a contradiction. Let $a \in (C(x) \cap X_l) \setminus Z(G)$, where $3 \leq l \leq 6$. Then $|C(a)|=|C(x)|=\frac{|G|}{2}$. Also, note that $C(a) \neq C(x)$, otherwise $X_l \subseteq C(x)$, and hence $|G: X_m| \leq 3$ for some $3 \leq m \leq 6$ (by \cite[Lemma 3.3]{ctc094}). But then $|G:Z(G)| \leq 6$ (by \cite[Proposition 2.5]{ed09}), which is a contradiction. It now easily follows that $|Cent(G)|\neq 9$.
Thus we have seen that $X_1 \cap X_2=Z(G)$ and hence $|G: Z(G)|= 16$. Now, using arguments similar to the case of $r=5$, we get a contradiction.

Finally, suppose $r=7$. In this case also, by \cite[Lemma 2.6]{ed09}, $G$ is not a $CA$-group. Again, by \cite[Remark 2.1]{ed09}, we have $G=X_1 \cup \dots \cup X_7$ and by \cite[Proposition 2.5]{ed09}, $X_i$'s are abelian for all $1 \leq i \leq 7$.  Now, suppose $K=\langle X_1, X_2, X_3 \rangle \lneq G$. Then 
\[
G=K \cup X_4 \cup X_5 \cup X_6 \cup X_7.
\]
It folllows from \cite[Lemma 3.3]{ctc094} that $|G:X_4|\leq 4$,  and hence by \cite[Proposition 2.5]{ed09}, we have $|G:X_4|=4$, otherwise $|G: Z(G)| \leq 9$ and $G$ will be a $CA$-group. Therefore in view of  \cite[Proposition 2.5]{ed09} again, it follows that $|G:X_1|= \dots =|G:X_7|=4$ and $G$ has a centralizer of index $2$, say $C(b)$ for some $b \in G$. Now, it is easy to see that $C(a) \cap X_i = Z(G)$ for some $1 \leq i \leq 7$, otherwise $X_i \subsetneq C(a)$ for all $1 \leq i \leq 7$, which is impossible. But then $|G : Z(G)|=8$, which is again impossible. Hence $\langle X_1, X_2, X_3 \rangle = G$ and by \cite[pp. 857]{ctc094}, we have  $|G: Z(G)| \leq 36$.

By \cite[Proposition 2.5]{ed09}, there exists a proper non-abelian centralizer, say $C(z)$ for some $z \in G$, which contains $X_{i_i}, X_{i_2}$ and $X_{i_3}$ for three distinct $i_1, i_2, i_3 \in \lbrace 1, \dots, 7 \rbrace$. Then 
\[
G=C(z) \cup X_{j_1} \cup X_{j_2} \cup X_{j_3} \cup X_{j_4},
\]
for four distinct $j_1, j_2, j_3, j_4 \in \lbrace 1, \dots, 7\rbrace$. In view of \cite[Lemma 3.3]{ctc094}, we have $|G:X_{j_1}| \leq 4$ and by \cite[Proposition 2.5]{ed09}, $|G:X_{j_1}| = |G:X_{j_2}|=|G:X_{j_3}|=|G:X_{j_4}|=4$.   Again, using \cite[Proposition 2.5]{ed09}, it is easy to see that $X_{j_k} \cap X_{j_l}=Z(G)$ for any $1 \leq k, l \leq 4$ with $k \neq l$. But then $C(z) \cap X_{j_k}=Z(G)$ for some $k \in \lbrace 1, \dots, 4 \rbrace$, and hence $|G: Z(G)| \leq 8$, which is again a contradiction. 

Therefore $r =8$. Now, by \cite[Lemma 2.6]{ed09}, $G$ is a $CA$-group and hence by \cite[Remark 2.1]{ed09} we have $X_i \cap X_j = Z(G)$ for  $1 \leq i, j \leq 8, i \neq j$. Moreover by \cite[Theorem A]{zarrin0942} we have $G$ is a solvable group and hence $\frac{G}{Z(G)}$ is either a $p$ group for some prime $p$ or a Frobenius group ( see \cite[Theorem 3.10]{ed009}). By  \cite[Proposition 2.5]{ed09}, We have
$G=X_1 \cup X_2 \cup \dots \cup X_8.$
Also, by \cite[Lemma 3.3]{ctc094}, we have $|G : X_2| \leq 7$.

Suppose $|G : X_2| = 6$. Then  $|G : Z(G)| \leq 36$. In view of some known results (see \cite{ed09}, \cite{en09}, \cite{baishya}), it follows that the possible values of $|G : Z(G)|$ are $36, 24$ and $18$.
 Now, suppose $|G: Z(G)| =36$. Clearly, $|G: X_1| = 6$, otherwise $|G: Z(G)| < 36$, which is not possible. Now, using \cite[Theorem 1]{ctc093}, it is easy to see that $|X_3| = |X_4|=|X_5|=\frac{|G|}{6}$. In this situation, if $|G: X_6|=6$, then again using \cite[Theorem 1]{ctc093} we get $|G: X_7|=9$ and $|G: X_8|=12$. One can easily see that $X_7 \lhd G$ and so $X_1X_7 \leq G$. But $|X_1X_7|=\frac{2|G|}{3}$, which is a absurd. Therefore by \cite[Theorem 1]{ctc093}, we have $|X_6| = |X_7|=|X_8|=\frac{|G|}{9}$. But then  $|G : Z(G)| \neq 36$, which is a contradiction.
 
 Next, suppose $|G: Z(G)|=24$. Then $\frac{G}{Z(G)} \cong S_4$ and hence $G$ has atleast $4$ centralizers of index $8$. Now, using \cite[Theorem 1]{ctc093}, one can verify that $|G: X_2|=|G : X_3|=|G: X_4|=6$ and $|G: X_5|=\dots =|G: X_8|=8$. Therefore, again using \cite[Theorem 1]{ctc093}, we get $G=X_1X_5$ and hence $|G: X_1|=3$. But then, $G=X_1X_2$ and so $|G: Z(G)|=18$, which is impossible.
 
 Finally, suppose $|G: Z(G)| =18$. Then $\frac{G}{Z(G)} \cong D_{18}$, and by \cite[Proposition 2.2]{ed09} we have $|Cent(G)|=11$, which is not possible. Thus, we have seen that $|G: X_2| \neq 6$.
 
 Now, suppose  $|G: X_2| = 5$, then  $|G: Z(G)| \leq 25$. In the present situation also, in view of some known results (see \cite{ctc092}, \cite{ed09}), we can see that $|Cent(G)| \neq 9$.

Next, suppose Suppose $|G: X_2| =4$. Then  $|G : Z(G)| = 16$, noting that $|G: Z(G)| > 15$ (see \cite{ctc092}, \cite{ed09}). Clearly, we must have $|X_1| = \frac{|G|}{4}$. Now, by calculating the number of cosets of $Z(G)$, in the $X_i$'s where $1 \leq i \leq 8$, one can easily get a contradiction. 

Finally, suppose  $|G : X_2| \leq 3$. Then  $|G: Z(G)| \leq 9$, and hence $|Cent(G)| \neq 9$ (by \cite[Theorem 5]{ctc092}).

Therefore $|G: X_2| = 7$ and so by \cite[Lemma 3.3]{ctc094},  $|G: X_2| = \dots =|G: X_8|= 7$. Moreover, by \cite[Theorem 1]{ctc093}, we have, $G=X_1X_2$. It follows that 
$|G: X_1| = 2, 3, 6$ or $7$. Consequently, 
$\frac{G}{Z(G)}\cong C_7 \rtimes C_2$ or $C_7 \rtimes C_3$ or $(C_6, C_7)$ or $C_7 \times C_7$. 

Conversely, if $\frac{G}{Z(G)}\cong (C_6, C_7)$, then using Correspondence theorem and  \cite[Problem 7.1]{isaacs}, one can see that $|Cent(G)|=9$. Again, if $\frac{G}{Z(G)}\cong C_7 \rtimes C_2$ or $C_7 \rtimes C_3$ or $C_7 \times C_7$, then by \cite[Corollary 2.5]{baishya}, we have $|Cent(G)|=9$.
\end{proof}

As a consequence we obtain the following result for primitive $9$-centralizer groups:

\begin{thm}\label{thm2}
Let $G$ be a finite group. Then $G$ is a primitive $9$-centralizer group if and only if $\frac{G}{Z(G)}\cong C_7 \rtimes C_2$ or $C_7 \rtimes C_3$ or $(C_6, C_7)$.
\end{thm}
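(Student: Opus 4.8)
The plan is to derive Theorem~\ref{thm2} directly from Theorem~\ref{thm1} by adding the single extra condition that characterizes primitivity, namely $|Cent(G)| = |Cent(G/Z(G))|$. By Theorem~\ref{thm1}, any $9$-centralizer group $G$ satisfies $G/Z(G) \cong C_7 \rtimes C_2$, $C_7 \rtimes C_3$, $(C_6, C_7)$, or $C_7 \times C_7$, so the task reduces to checking, for each of these four candidates $H = G/Z(G)$, whether $|Cent(H)| = 9$. Note that $H/Z(H)$ is centerless in the first three cases (each is a Frobenius-type or near-Frobenius group with trivial center), so applying Theorem~\ref{thm1} again to $H$ itself — or simply computing $|Cent(H)|$ by hand — settles those: one expects $|Cent(H)| = 9$ in exactly those three cases.

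First I would dispose of the easy direction. If $G/Z(G) \cong C_7 \rtimes C_2$, $C_7 \rtimes C_3$, or $(C_6,C_7)$, then $G/Z(G)$ is a group with trivial center, so $(G/Z(G))/Z(G/Z(G)) = G/Z(G)$, and it remains to observe that $|Cent(G/Z(G))| = 9$. Each such $H$ is itself a $9$-centralizer group by the converse part of Theorem~\ref{thm1} (taking $H$ in the role of $G$, since $H/Z(H) = H$ is one of the listed quotients), giving $|Cent(G/Z(G))| = 9 = |Cent(G)|$; hence $G$ is primitive $9$-centralizer. Conversely, if $G$ is primitive $9$-centralizer, then in particular $G$ is $9$-centralizer, so $G/Z(G)$ is one of the four listed groups; I must rule out $G/Z(G) \cong C_7 \times C_7$.

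The crux is therefore the exclusion of $C_7 \times C_7$. Here $G/Z(G)$ is abelian, so $Z(G/Z(G)) = G/Z(G)$, and thus $(G/Z(G))/Z(G/Z(G))$ is trivial, giving $|Cent(G/Z(G))| = 1 \neq 9 = |Cent(G)|$. This contradicts primitivity, so this case cannot occur for a primitive $9$-centralizer group. (More conceptually: if $G/Z(G)$ is abelian then $G$ is nilpotent of class at most $2$ and can never be primitive $n$-centralizer for $n > 1$, since $|Cent(G/Z(G))| = 1$.) Assembling the two directions yields the stated equivalence.

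I do not expect any serious obstacle: the argument is a short corollary-style deduction from Theorem~\ref{thm1} together with the elementary observations that the three Frobenius-type quotients are centerless while $C_7 \times C_7$ is abelian. The only point requiring a line of care is the explicit verification that $|Cent(H)| = 9$ for $H \in \{C_7 \rtimes C_2,\ C_7 \rtimes C_3,\ (C_6,C_7)\}$, which follows from the converse direction of Theorem~\ref{thm1} applied with $G = H$ (as each such $H$ has $H/Z(H) \cong H$), or alternatively by a direct count of centralizers in these small groups.
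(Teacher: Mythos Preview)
Your proposal is correct and follows essentially the same route as the paper: both derive the result as an immediate corollary of Theorem~\ref{thm1} by checking which of the four possible quotients $H=G/Z(G)$ satisfy $|Cent(H)|=9$, ruling out $C_7\times C_7$ because it is abelian. Your recursive use of Theorem~\ref{thm1} (applied to the centerless $H$ itself) to obtain $|Cent(H)|=9$ is a clean alternative to the paper's appeal to direct verification and \cite[Problem~7.1]{isaacs}, and your explicit treatment of both directions is slightly more complete than the paper's terse argument.
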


\begin{proof}
Using \cite[Problem 7.1]{isaacs}, we can see that $|Cent((C_6, C_7))|=9$. Moreover, it can be easily verify that  $|Cent(C_7 \rtimes C_3)|=9$  and $|Cent(C_7 \rtimes C_3)|=9$. Now, the result follows from Theorem \ref{thm1}.
\end{proof}



\begin{thebibliography}{33}

\bibitem{ed09}
A. Abdollahi, S. M. J. Amiri and A. M. Hassanabadi,  {\em Groups with specific number of centralizers}, Houston Journal of Mathematics, \textbf{33} (1) (2007), 43--57.

\bibitem{ed009}
A. Abdollahi, S. Akbari and H. R. Maimanii,  {\em Non-commuting graph of a group},  J. Algebra, \textbf{298}  (2006), 468--492.

\bibitem{en09}
 A. R. Ashrafi, {\em On finite groups with a given number of centralizers}, Algebra Colloquium, \textbf{7} (2) (2000), 139--146.
 
\bibitem{ctc09}
A. R. Ashrafi, {\em Counting the centralizers of some finite groups}, Korean  J. Comput. and Appl. Math., \textbf{7} (1) (2000), 115--124.
 


\bibitem{ctc091}
A. R. Ashrafi and B. Taeri,  {\em On finite groups with exactly seven element centralizers },  J. Appl. Math. and Computing, \textbf{22} (1-2) (2006), 403--410.

\bibitem{ctc099}
A. R. Ashrafi and B. Taeri,  {\em On finite groups with a certain number of centralizers },  J. Appl. Math. and Computing, \textbf{17} (1-2) (2005), 217--227.

\bibitem{baishya}
S. J. Baishya, {\em On  finite groups with specific number of centralizers}, Int. Elect. J. Algebra, \textbf{13} (2013) 53--62.

\bibitem{ctc092}
S. M. Belcastro and G. J. Sherman, {\em Counting centralizers in finite groups}, Math. Magazine, \textbf{67} (5) (1994), 366--374.

\bibitem{bryan}
M. Bruckheimer, A. C. Bryan and A. Muir,  {\em Groups which are union of three subgroups},  Amer. Math. Monthly, \textbf{77}  (1970), 52--57.


\bibitem{ctc093}
J. H. E. Cohn, {\em On n-sum groups},  Math. Scand, \textbf{75}  (1994), 44--58.


\bibitem{isaacs}
I. M. Isaacs, {\em Character theory of finite groups},  academic Press. Inc., London, new York (1976).


\bibitem{ctc094}
M. J. Tomkinson, {\em Groups covered by finitely many cosets or subgroups}, Comm. Algebra,  \textbf{15} (4) (1987), 845--859.

\bibitem{zarrin094}
M. Zarrin, {\em On element centralizers in finite groups}, Arch. Math.,  \textbf{93} (2009), 497--503.


\bibitem{zarrin0941}
M. Zarrin, {\em Criteria for solubility of finite groups by its centralizers}, Arch. Math.,  \textbf{96} (2011), 225--226.


\bibitem{zarrin0942}
M. Zarrin, {\em On solubility of groups with finitely many centralizers}, Bull. Iran. Math. Soc., \textbf{39} (3) (2013), 517--521.






\end{thebibliography}
\end{document}